\tikzset{shorten <>/.style={shorten >=#1,shorten <=#1}}
\theoremstyle{definition}
\newtheorem{definition}{Definition}
\newtheorem{construction}[definition]{Construction}
\newtheorem{remark}[definition]{Remark}
\theoremstyle{theorem}
\newtheorem{proposition}[definition]{Proposition}
\newtheorem{example}[definition]{Example}
\newcommand{\sctikzfig}[2][.8]{\begin{center}\scalebox{#1}{\tikzfig{#2}}\end{center}}
\newcommand{\comp}{\fatsemi}
\newcommand\Para[2][{}]{\mathbf{Para}_{#1} (#2)}
\newcommand\CoPara[2][{}]{\mathbf{CoPara}_{#1} (#2)}
\newcommand\Optic[2][{}]{\mathbf{Optic}_{#1} (#2)}
\newcommand\Lens[1]{\cat{Lens}(#1)}
\newcommand\ps\Pa 
\newcommand\Sel{{\Sb}} 
\newcommand\diset[2]{\binom{#1}{#2}} 
\newcommand{\Mod}[1]{{#1}\text{-}\cat{Mod}}
\newcommand{\cat}[1]{\mathbf{#1}}
\newcommand\Set{\cat{Set}}
\newcommand\Cat{\cat{Cat}}
\newcommand\Smooth{\cat{Smooth}}
\newcommand{\action}{\bullet}
\newcommand{\ostar}{\circledast}
\newcommand{\colim}{\operatorname{colim}}
\DeclareMathOperator{\argmax}{argmax}
\DeclareMathOperator{\Ca}{\mathcal{C}}
\DeclareMathOperator{\Da}{\mathcal{D}}
\DeclareMathOperator{\Ga}{\mathcal{G}}
\DeclareMathOperator{\Ma}{\mathcal{M}}
\DeclareMathOperator{\Pa}{\mathcal{P}}
\DeclareMathOperator{\Rb}{\mathbb{R}}
\DeclareMathOperator{\Sb}{\mathbb{S}}
\newcommand{\Comega}{{\mathchoice
        {\rotatebox[origin=c]{180}{$\displaystyle \Omega$}}
        {\rotatebox[origin=c]{180}{$\textstyle \Omega$}}
        {\rotatebox[origin=c]{180}{$\scriptstyle \Omega$}}
        {\rotatebox[origin=c]{180}{$\scriptscriptstyle \Omega$}}
}}
\newcommand{\longto}{\longrightarrow}
\newcommand{\opticto}{\rightleftarrows}
\newcommand{\twoto}{\Rightarrow}
\newcommand{\narrow}[2]{\overset{#1}{#2}}
\newcommand{\nto}[1]{\narrow{#1}{\to}}
\newcommand{\nlongto}[1]{\narrow{#1}{\longto}}
\newcommand{\op}{\mathrm{op}}
\newcommand{\co}{\mathrm{co}}
\tikzstyle{circ}=[fill=white, draw=black, shape=circle]
\tikzstyle{blank}=[fill=white, draw=white, shape=circle]
\tikzstyle{none}=[fill=none, draw=none]
\tikzstyle{copy}=[fill=white, draw=black, shape=circle, minimum height=0.2cm, inner sep=0]
\tikzstyle{varCopy}=[fill=black, draw=black, shape=circle, minimum height=0.2cm, inner sep=0]
\tikzstyle{copy2}=[fill=black, draw=black, shape=circle, minimum height=0.2cm, inner sep=0]
\tikzstyle{1morph1}=[fill=white, draw=black, shape=rectangle, minimum width=1cm, minimum height=1cm]
\tikzstyle{1morph}=[fill=white, draw=black, shape=rectangle, minimum width=0.75cm, minimum height=0.75cm, inner sep=0.1cm]
\tikzstyle{2morph2}=[fill=white, draw=black, shape=rectangle, minimum width=1cm, minimum height=2cm]
\tikzstyle{2morph}=[fill=white, draw=black, shape=rectangle, minimum width=1cm, minimum height=1.25cm, inner sep=0.1cm]
\tikzstyle{nmorph}=[fill=white, draw=black, shape=rectangle, minimum height=6cm, minimum width=1cm, inner sep=0.1cm]
\tikzstyle{1state}=[fill=white, draw=black, regular polygon, regular polygon sides=3, minimum height=0.5cm, regular polygon rotate=-30]
\tikzstyle{dbox}=[fill=white, draw=black, dashed, shape=rectangle, minimum width=2cm, minimum height=1cm, inner sep=0.1cm]
\tikzstyle{vdbox}=[fill=white, draw=black, dashed, shape=rectangle, minimum width=2cm, minimum height=1.5cm, inner sep=0.1cm]
\tikzstyle{bigbox}=[fill=white, draw=black, dashed, shape=rectangle, minimum width=2cm, minimum height=4cm, inner sep=0.1cm]
\tikzstyle{2state}=[inner sep=0.05cm, fill=white, draw=black, isosceles triangle, minimum width=1.25cm, isosceles triangle apex angle=90, shape border rotate=180]
\tikzstyle{var2state}=[inner sep=0.05cm, fill=white, draw=black, isosceles triangle, minimum width=1.25cm, isosceles triangle apex angle=60, shape border rotate=180]
\tikzstyle{g2state}=[inner sep=0.05cm, fill=white, draw=black, isosceles triangle, minimum width=6cm, isosceles triangle apex angle=110, shape border rotate=180]
\tikzstyle{bigstate}=[inner sep=0.05cm, fill=white, draw=black, isosceles triangle, minimum width=3cm, isosceles triangle apex angle=110, shape border rotate=180]
\tikzstyle{bigeffect}=[inner sep=0.05cm, fill=white, draw=black, isosceles triangle, minimum width=3cm, isosceles triangle apex angle=110]
\tikzstyle{g2effect}=[inner sep=0.05cm, fill=white, draw=black, isosceles triangle, minimum width=6cm, isosceles triangle apex angle=110]
\tikzstyle{2effect}=[inner sep=0.05cm, fill=white, draw=black, isosceles triangle, minimum width=1.25cm, isosceles triangle apex angle=90]
\tikzstyle{b2effect}=[inner sep=0.05cm, fill=white, draw=black, isosceles triangle, minimum width=2cm, isosceles triangle apex angle=90]
\tikzstyle{midArrow}=[-, decoration={{markings,mark=at position .5 with {\arrow{>}}}}, postaction=decorate]
\tikzstyle{Medium box}=[fill=white, draw=black, shape=rectangle, tikzit shape=rectangle, minimum width=1.5cm, minimum height=1.5cm]
\tikzstyle{Black arrow}=[->]
\tikzstyle{Gray line}=[-, draw={rgb,255: red,191; green,191; blue,191}, line width=0.8]
\tikzstyle{arrow}=[->]
\title{
    Towards Foundations of Categorical Cybernetics
}
\author{
    Matteo Capucci, Bruno Gavranovi\'c,\\
    Jules Hedges, Eigil Fjeldgren Rischel
    \institute{MSP Group, University of Strathclyde, \'Ecosse Libre}
}
\begin{document}
    \maketitle

    \begin{abstract}
        We propose a categorical framework for processes which interact bidirectionally with both an environment and a `controller'.
        Examples include open learners, in which the controller is an optimiser such as gradient descent, and an approach to compositional game theory closely related to open games, in which the controller is a composite of game-theoretic agents.
        We believe that `cybernetic' is an appropriate name for the processes that can be described in this framework.
    \end{abstract}

    \section{Introduction}

In this paper we propose a categorical framework for processes which interact bidirectionally with both an environment and a `controller'. Examples include open learners \cite{fong-spivak-tuyeras-backprop-as-functor, CatFoundGradDesc}, in which the controller is an optimiser such as gradient descent, and an approach to compositional game theory closely related to open games \cite{hedges_etal_compositional_game_theory} in which the controller is a composite of game-theoretic agents. We believe that `cybernetic' is an appropriate name for the processes that can be described in this framework.

A common theme of this paper is that the same categorical ingredients appear repeatedly at multiple `levels' in the construction of cybernetic systems. The most important of these is the \emph{para construction}, which yields a category $\Para{\Ca}$ of parametrised morphisms given a category $\Ca$ acted upon by a monoidal category of parameters $\Ma$.
%
%
We combine this with the optics construction \cite{ProfunctorOptics}, which is known to model bidirectional information flow in dynamical systems \cite{myers2020double} and compositional game theory \cite{bolt_hedges_zahn_bayesian_open_games}.
We argue that we can think of optics as being built of coupled parametrised and `coparametrised' morphisms.
This suggests that $\Para{-}$ and $\Optic{-}$ are related.
Putting them together, we obtain the category of parametrised optics, which is the central construction of this paper.

A parametrised optic describes open systems that have bidirectional information flow, with a control on the forward direction and an objective on the backwards direction. This pattern is ubiquitous in cybernetics: whenever an instance of the optic pattern appears, we generally find that they form a parametrised family with an objective function, where the goal of the system is to find a parameter that optimises the objective, all while interacting with its environment bidirectionally.
The most straightforward example is a component (for example a layer) of a neural network, which interacts with its neighbours via forwards and backwards passes, depending on its current parameters (weights and biases), and produces a `local' loss against which the parameters are optimised.

The final part of the construction of a class of cybernetic systems is to describe how the parameters/controls and objectives interact. For many classes of systems, they are coupled to a dynamical system that updates the parameters using an algorithm such as gradient descent.
In the case of game theory the situation is more complicated, since game-theoretic agents perform counterfactual reasoning and are classically assumed to be perfect optimisers rather than using some effective approximation of the optimum. This situation can be described using selection functions \cite{hedges_etal_selection_equilibria_higher_order_games}.
We find that there is a deep and previously unknown relationship between selection functions and optics; in particular we can build a category whose objects are selection functions and morphisms are optics, with a monoidal product describing Nash equilibria.

By combining parametrised optics and selection functions, we obtain a compositional formulation of game theory. This turns out to be more specific than open games: a pair of a parametrised optic and a selection function determines an open game functorially, but not vice versa. However each part of our construction carries a clear game-theoretic meaning, and we also fix a particular technical problem with open games involving games where one agent makes several different decisions. Overall, we claim that this variant of open games is an improvement on the usual one.

For reasons of space we only present two classes of examples: neural networks which can be presented entirely using the structure of parametrised optics, and open games which have selection functions as an additional ingredient. However, abstractly the same construction works if selection functions are replaced with any suitable lax monoidal pseudofunctor on a category of optics. Further applications of this framework are given in a companion paper \cite{other_paper}.


    \section{Para}
Throughout this section, let $(\Ma, J, \odot)$ be a strict
monoidal category and let $\Ca$ be a category.

\begin{definition}
	An $\Ma$-actegory is a category $\Ca$ together with a functor $\action : \Ma \times \Ca \to \Ca$, and natural transformations $\epsilon : J \action X \cong X$ and $\delta : (M \odot N) \action X = M \action (N \action X)$ satisfying certain coherence laws \cite[Definition 2.2]{vasilakopoulou2018enriched}.
\end{definition}

%

Actegories (note the intentional spelling change) are a vertical categorification of monoid actions.
For the purposes of this paper, $\Ma$ is to be thought of as a category of \emph{parameters} and $-\action-$ as a \emph{parametrisation operation}.
From the structure of an $\Ma$-actegory then, we can produce a category of $\Ma$-parametrised morphisms in $\Ca$:

\begin{definition}[Para]
\label{def:para}
	Let $\Ca$ be an $\Ma$-actegory.
	$\Para[\action]{\Ca}$ is the bicategory whose
	\begin{enumerate}
		\item objects are the same as $\Ca$;
		\item 1-cells $\phi \in \Para[\action]{\Ca}(X,Y)$ are given by a choice of object $M$ (the \emph{parameter}) in $\Ma$ and a choice of morphism $\phi : M \action X \to Y$ in $\Ca$, and we condense this data into the notation $\phi : X \nto{M} Y$;
		\item 1-composition of $\phi : X \nto{M} Y$ and $\psi : Y \nto{N} Z$ is given by
		\[
			\phi \comp \psi : X \nlongto{N \odot M} Z
			\quad := \quad
			(N \odot M) \action X \nlongto{\delta} N \action (M \action X) \nlongto{N \action \phi} N \action Y \nlongto{\psi} Z
		\]
		\item 2-cells $r : \phi \twoto \phi'$ from $\phi : X \nto{M} Y$ to $\phi' : X \nto{M'} Y$ are given by morphisms $r : M' \to M$ in $\Ma$ such that
		\[
			\begin{tikzcd}
				M' \action X \arrow[swap]{d}{r \action X} \arrow{dr}{\phi'}\\
				M \action X \arrow[swap]{r}{\phi} & Y
			\end{tikzcd}
		\]
		commutes.
		\item Identities and composition in the hom-cat $\Para[\action]{\Ca}(X, Y)$ are the same as $\Ma$.
	\end{enumerate}
\end{definition}

\begin{figure}
	\begin{minipage}{0.5\textwidth}
		\sctikzfig{figures/para}
	\end{minipage}\begin{minipage}{0.5\textwidth}
		\sctikzfig{figures/copara}
	\end{minipage}
	\caption{(a) A morphism in $\Para{\Ca}$ \hspace{3cm} (b) A morphism in $\CoPara{\Ca}$}
	\label{fig:para_morphism}
\end{figure}

Morphisms in $\Para{\Ca}$ are more naturally denoted using string diagrams, as shown in Figure~\ref{fig:para_morphism}. In these diagrams, objects of $\Ma$ are denoted by vertical strings, and objects of $\Ca$ by horizontal strings. Formally, these (and most other diagrams in this paper) can be considered as diagrams in a larger double category in which both $\Para{\Ca}$ and $\CoPara{\Ca}$ embed, and thus we can apply coherence for string diagrams in double categories \cite{meyers_string_diagrams_double_categories}.

\begin{figure}[h]
	\sctikzfig{figures/para_composition}
	\caption{1-cell composition in $\Para{\Ca}$}
	\label{fig:para_comp}
\end{figure}
\begin{figure}[h]
	\sctikzfig{figures/reparametrisation}
	\caption{Reparametrisation along a 2-cell $r$ in $\Para{\Ca}$}
	\label{fig:para_reparam}
\end{figure}

\begin{remark}
	The most common usage of 2-cells in $\Para{\Ca}$ is through a lift-like operation called \emph{reparametrisation}, i.e. using a map of parameters (thus in $\Ma$) to change parameter space. This means we rarely provide the codomain of a 2-cell and instead we produce it by lifting.
	Explicitly, given $r : M' \to M$ and an $M$-parametrised morphism $\phi : M \action X \to Y$, we produce an $M'$-parametrised morphism $\phi' = (r \action X) \comp \phi$ which we denote by $r^* \phi$ (an operation depicted in Figure~\ref{fig:para_reparam}), so that $r:\phi \twoto r^* \phi$ \emph{lifts} $r : M' \to M$.
	This amounts to say that $\Para{\Ca}$ is \emph{locally fibred} over $\Ma$.
\end{remark}

\begin{remark}
	There is a dual construction to that of $\Para{-}$, which we call $\CoPara{-}$, which produces a category of \emph{coparametrised} morphisms, i.e. morphisms whose \emph{co}domain is parametrised (Figure~\ref{fig:para_morphism}(b)).
	A morphism $X \to Y$ in $\CoPara{\Ca}$ consists of an object $M$ and a morphism $\phi : X \to M \bullet Y$.
	Moreover, 2-cells go in the opposite direction (again, as hinted by the graphical language): the recoparametrisation of $\phi$ along $r : M \to M'$ is a 2-morphism $\phi \twoto r_* \phi$ defined by postcomposition with $r \action Y$.
	This makes $\CoPara{\Ca}$ locally opfibred over $\Ma$.
	All things considered, we can concisely define $\CoPara{\Ca}$ as $\Para{\Ca^{\op}}^{\co\op}$.
\end{remark}

It is useful to notice that $\Para{-}$ is a functorial construction, which also enjoys a monadic structure:

\begin{proposition}\label{prop:para_monad}
	$\Para{-}$ is a monad on the category $\Mod\Ma$ of $\Ma$-actegories.
\end{proposition}
\begin{proof}
	The action of $\Ma$ on $\Para{\Ca}$ is inherited from $\Ca$ in the obvious way.
	Its unit is given by faithful, identity-on-objects functors sending a morphism $\phi$ to $(J, \phi)$.
	Its join sends a twice parametrised map $(m, n, \phi)$ to the once parametrised map $(m \odot n, \phi)$.
\end{proof}

\subsection{Monoidal structure on \texorpdfstring{$\Para{\Ca}$}{Para(C)}.}
If both $\Ma$ and $\Ca$ are additionally monoidal categories in a way that is compatible with the actegory structure, we would like to lift the monoidal structure to $\Para{\Ca}$. Unfortunately bifunctoriality of the monoidal product on $\Para{\Ca}$ requires that the monoidal product on $\Ma$ is \emph{commutative}, that is, $M \odot N = N \odot M$ on the nose rather than up to a symmetry morphism. We believe that in the general case, $\Para{\Ca}$ is symmetric monoidal as a bicategory \cite[Section 12.1]{johnson2021}. Commutative monoidal categories are rare in practice, but for presentation purposes in this paper we assume that $\Ma$ is commutative monoidal.


\begin{definition}\label{def:para_monoidal}
	A symmetric monoidal $\Ma$-actegory is an $\Ma$-actegory $\Ca$ equipped with a symmetric monoidal structure, together with a natural isomorphism $\kappa_{M,X,Y} : M \action (X \otimes Y) \cong X \otimes (M \action Y)$, satisfying coherence laws reminiscent of the laws of a costrong comonad.
\end{definition}

Given a symmetric monoidal $\Ma$-actegory, we can also define natural isomorphisms $\alpha : M \action (X \otimes Y) \cong (M \action X) \otimes Y$ and $\iota : (M \odot N) \action (X \otimes Y) \cong (M \action X) \otimes (N \action Y)$, called \emph{mixed associator} and \emph{interchanger}, respectively.

As anticipated, when $\Ca$ is a symmetric monoidal $\Ma$-actegory, then $\Para[\action]{\Ca}$ is itself a symmetric monoidal category, whose structure we still denote with $I$ and $\otimes$.
The tensor of two objects $X, Y$ is given by the same tensor $X \otimes Y$, whereas the tensor of two arrows $\phi : X \nto{M} Y$ and $\psi : U \nto{N} V$ is given by
\[
	\phi \otimes \psi : X \otimes X' \nlongto{M \odot N} Y \otimes Y'
	\quad := \quad
	(M \odot N) \action X \otimes X' \nlongto{\iota} (M \action X) \otimes (N \action Y) \nlongto{\phi \otimes \psi} Y \otimes Y'
\]
Bifunctoriality follows from the commutativity of $\Ma$.
Unitors and associator are borrowed from $\Ca$ through the immersion $\Ca \to \Para[\action]{\Ca}$.

\begin{figure}[ht]
	\sctikzfig{figures/para_tensor}
	\caption{Monoidal product in $\Para{\Ca}$}
	\label{fig:para_tensor}
\end{figure}

\begin{remark}
	It turns out that the structure of a symmetric monoidal $\Ma$-actegory $\Ca$ is equivalent to that of a strong monoidal functor $\Ma \to \Ca$.
	Given an action, this corresponding functor is $F: M \mapsto M \action I$. Given a functor, the corresponding action is $(M, X) \mapsto F(M) \otimes X$.
	We can think of the image of $- \action I$ as a `representation' of $\Ma$ inside $\Ca$.
\end{remark}


    \section{Optics}

The story of dynamical systems is grounded in the notion of bidirectionality and feedback: processes do not just flow in one direction, but often compute values \textit{going forward} and compute some notion of \textit{feedback} or \textit{rewards} going backward.
In order of increasing generality, this is captured by lenses, optics, and mixed optics. In this paper we proceed with the latter notion and invoke special cases as needed.

\begin{definition}[{Optics, \cite[Def. 2.1]{ProfunctorOptics}}]\label{def:optics}
  Given a monoidal category $\Ma$ acting on categories $\Ca, \Da$ through actions we both denote with $\action$ be two $\Ma$-actegories, we can define the category $\Optic[\action,\action]{\Ca, \Da}$ whose objects are pairs of objects of $\Ca$ and $\Da$, and whose hom-sets are defined by the following coend
  \[
    {\textstyle \Optic[\action,\action]{\Ca, \Da}(\diset{X}{X'}, \diset{Y}{Y'})} = \int^M \Ca(X, M \action Y) \times \Da(M \action Y', X')
  \]
\end{definition}

We can think of optics as a bidirectional process taking in an $X$ and computing an output $M \action Y$. Depending on the monoidal action used, this can unpack to a number of different things. Most often it is a product of an output $Y$ and the intermediate state $M$ (called the residual). Off-screen, the environment takes in the output $Y$ and returns a $Y'$. In the case of the monoidal action of the product, this $Y'$ is used together with the previously saved $M$ to produce an $X'$ (Figure~\ref{fig:optic}).

Concretely, an optic $\diset{X}{X'} \opticto \diset{Y}{Y'}$ is an equivalence class of triples $(M, v, u)$ where $M$ is an object of $\Ma$, $v : X \nto{M} Y$ is a morphism of $\CoPara{\Ca}$ and $u : Y' \nto{M} X'$ is a morphism of $\Para{\Ca}$, modulo the equivalence relation generated by $(M, v, f^* u) \sim (M', f_* v, u)$ for morphisms $f : M \to M'$ of $\Ma$.
This is a small equivalent reformulation of the usual definition of optics via a coend.
The coend in this formulation quotients out observationally indistinguishable reparametrisations, meaning that morphisms of residuals can be freely slid over back and forth between the forward and backward part.

\begin{remark}[Specializations]\label{rem:spec}
  If $\Ma = \Ca=\Da$, we speak of \textit{non-mixed optics} and we write $\Optic[\otimes]{\Ca}$, sometimes omitting the action as well.
  If $\Ca$ and $\Da$ are symmetric monoidal, then $\Optic[\action,\action]{\Ca, \Da}$ is itself symmetric monoidal.
  Also observe that $\cat{Lens(\Ca)} \cong \Optic[\times]{\Ca}$ for a cartesian monoidal category.
\end{remark}

\begin{figure}[h]
  \sctikzfig{figures/optic}
  \caption{Graphical representation of an optic $\diset{X}{X'} \opticto \diset{Y}{Y'}$. We conventionally think of the $u$ node as rotated rather than reflected, so parameters enter at the bottom (and, later, coparameters will leave at the top). For an animated version explaining the information flow, see \cite{OpticAnimation,OpticCompositionAnimation}.}
  \label{fig:optic}
\end{figure}


Under certain conditions, we have a convenient description of the collection of \textbf{states} (maps from the monoidal unit) and \textbf{costates} (maps to the monoidal unit) in the category of optics.
On the one hand, states of $\diset{X}{X'}$ in $\Optic{\Ca,\Da}$ when $\Da$ and $\Ma$ are semicartesian (hence their unit coincides with the terminal object) coincide with states of $X$ in $\Ca$:
\[
  \int^{M}\Ca(1, X \action M) \times \Da(X' \action M, 1) \cong \int^{M}\Ca(1, X \action M)
  \cong \colim_{M} \Ca(1, X \action M) \cong \Ca(1, X)
\]
Here we use first terminality of $1$ in $\Da$, then the fact that mute coends are colimits \cite[Lemma 1.2.4]{CategoriesOfOptics}, then the fact that a colimit over an index category with a terminal object is just the functor evaluated at that object, and $1$ is terminal in $\Ca$.
On the other hand, costates of $\diset{X}{X'}$ in non-mixed optics coincides with maps $X \to X'$ in $\Ca$.
\[
  \int^{M}\Ca(X, M \otimes I) \times \Ca(M \otimes I,X') \cong \int^{M}\Ca(X, M)
    \times \Ca(M,X') = \Ca(X,X')
\]
by the ninja Yoneda lemma \cite{loregian2020coend}.
We note that the case when the forward and the backward categories are different can still be made precise, but now instead of a homomorphism $X \to X'$ in $\Ca$ we have a \textbf{heteromorphism} $X \to X'$ from $\Ca$ to $\Da$ (mediated by a residual in $\Ma$).






    \section{Parametrised optics for cybernetics systems}\label{sec:para-optic}

The $\Para{-}$ construction lets us consider morphisms with ``hidden'' information, not available at the boundary of a morphism. The $\Optic{-}$ construction lets us consider bidirectional morphisms in this category, allowing for constructions involving \emph{feedback} and \emph{update}. Applying the former to the latter results in a sophisticated construction which we unpack in this section.

\paragraph{Parametrised optics.}
Given a symmetric monoidal category $\Ma$ acting on $\Ca$ and $\Da$, we have shown above how to form the category $\Optic{\Ca, \Da}$.
Moreover, since $\Ma$ is a symmetric monoidal category, it acts on itself by the monoidal product, so we can form the category $\Optic\Ma$ of non-mixed optics, which is symmetric monoidal with pointwise monoidal product.

\begin{proposition}
\label{prop:canon_optic_action}
	Let $\Ma$ be symmetric monoidal and $\Ca$ and $\Da$ be $\Ma$-actegories. There is an action of $\Optic[\odot]\Ma$ on $\Optic[\action,\action]{\Ca,\Da}$, defined on objects as
	$
		\diset{M}{M'} \ostar \diset{X}{X'} := \diset{M \action X}{M' \action X'}.
	$
\end{proposition}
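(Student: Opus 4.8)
The plan is to supply the data that Proposition~\ref{prop:canon_optic_action} leaves implicit --- the effect of $\ostar$ on optic morphisms and the coherence isomorphisms of the action --- and then to verify the actegory axioms, with the symmetry of $\Ma$ doing the essential work.

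First I would fix, for $P,N\in\Ma$, $Q\in\Ma$ and $Y\in\Ca$, a natural isomorphism
\[
  \sigma^{\Ca}_{P,N,Q,Y}\colon (P\odot N)\action(Q\action Y)\;\xrightarrow{\sim}\;(P\odot Q)\action(N\action Y)
\]
in $\Ca$, built from the actegory associator $\delta$, its inverse, and the symmetry of $\Ma$ (schematically $(P\odot N)\action(Q\action Y)\cong P\action((N\odot Q)\action Y)\cong P\action((Q\odot N)\action Y)\cong(P\odot Q)\action(N\action Y)$), together with the analogous $\sigma^{\Da}$ in $\Da$; this is the single place where we genuinely need $\Ma$ symmetric, not merely monoidal. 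Since cartesian product in $\Set$ preserves colimits in each variable, defining $\ostar$ on hom-sets reduces to giving, dinaturally in $P$ and in $Q$, a map
\[
  \Ma(M,P\odot N)\times\Ma(P\odot N',M')\times\Ca(X,Q\action Y)\times\Da(Q\action Y',X')\;\longto\;\Optic[\action,\action]{\Ca,\Da}\big(\diset{M\action X}{M'\action X'},\diset{N\action Y}{N'\action Y'}\big),
\]
and I would define it by sending $(f,g,v,u)$ to the optic with residual $P\odot Q$, forward leg $M\action X\xrightarrow{f\action v}(P\odot N)\action(Q\action Y)\xrightarrow{\sigma^{\Ca}}(P\odot Q)\action(N\action Y)$ and backward leg $(P\odot Q)\action(N'\action Y')\xrightarrow{(\sigma^{\Da})^{-1}}(P\odot N')\action(Q\action Y')\xrightarrow{g\action u}M'\action X'$, composed with the coend injection at $P\odot Q$.

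That this descends to the coends I would check by dinaturality separately in $P$ and in $Q$ (which suffices, since $\Ma\times\Ma$-morphisms factor through the two slots): a reindexing in either slot is absorbed by naturality of $\sigma$ there together with the coend relation on the target, so representatives identified by the relation on either tensor factor have equal images. Functoriality of $\ostar$ then splits into the identity case --- take $P=Q=J$, where $\sigma$ collapses to an identity by actegory coherence --- and the composition case, a diagram chase stacking two copies of $\sigma$ and invoking its naturality, the hexagon for the symmetry of $\Ma$, and the coherence of $\delta$. This composition check is the main computational obstacle, though it is entirely mechanical once $\sigma$ is in hand.

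Finally I would exhibit the coherence isomorphisms making $\Optic[\action,\action]{\Ca,\Da}$ an $\Optic[\odot]\Ma$-actegory: the unitor $\diset{J}{J}\ostar\diset{X}{X'}\cong\diset{X}{X'}$ induced by $\epsilon\colon J\action(-)\cong(-)$ on both legs, and the distributor $(\diset{M}{M'}\otimes\diset{N}{N'})\ostar\diset{X}{X'}\cong\diset{M}{M'}\ostar(\diset{N}{N'}\ostar\diset{X}{X'})$ --- with $\otimes$ the monoidal product of $\Optic[\odot]\Ma$ from Remark~\ref{rem:spec} --- induced by $\delta\colon(M\odot N)\action(-)\cong M\action(N\action(-))$; both promote from $\Ca$ and $\Da$ to optics because $\epsilon$ and $\delta$ are natural and compatible with $\sigma$. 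The pentagon and triangle laws for the action then reduce, componentwise in the residual, to the coherence laws already assumed for the $\Ma$-actegories $\Ca,\Da$ together with the symmetry axioms of $\Ma$, hence hold. (More structurally, one notes that $\action\colon\Ma\times\Ca\to\Ca$ and $\action\colon\Ma\times\Da\to\Da$ are morphisms of $\Ma$-actegories for the action of $\Ma$ on the first factor, and that $\Optic{-}$ is lax with respect to products of actegories; then $\Optic[\odot]\Ma$, the optics of $\Ma$ acting on itself, acts on $\Optic[\action,\action]{\Ca,\Da}$ formally, with the product-combination of residuals recovering $\sigma$. I would mention this route but carry out the hands-on version.)
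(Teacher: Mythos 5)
Your construction is correct and is exactly the argument the paper leaves implicit: the paper states Proposition~\ref{prop:canon_optic_action} without proof, and what you supply --- the action on morphisms via the interleaving isomorphism $\sigma$ built from $\delta$, $\epsilon$ and the symmetry of $\Ma$ (the one genuine use of symmetry, combining the residuals as $P\odot Q$), descent to the coend by dinaturality in each residual slot, functoriality, and the unitor/multiplicator for the action inherited componentwise from the actegory structures of $\Ca$ and $\Da$ --- is the expected verification. One small caution: in the identity case the construction yields residual $J\odot J$ rather than $J$, so $\sigma$ does not literally collapse to an identity; you must slide the unitor $J\odot J\cong J$ across using the coend relation (and likewise reassociate residuals in the composition case), which your setup handles but your phrasing slightly elides. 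Note also that only a braiding on $\Ma$ is actually needed for this part of the argument.
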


We thus can form the bicategory $\Para[\ostar]{\Optic[\action,\action]{\Ca, \Da}}$ for this action.
Concretely, for objects $X,Y$ of $\Ca$ and $X',Y'$ of $\Da$, a \textbf{parametrised optic} $\diset{X}{X'} \opticto \diset{Y}{Y'}$ consists of a choice of objects $(P, Q)$ of $\Ma$, and an equivalence class of triples $(M, v, u)$ where $M$ is an object of $\Ma$, $v : P \action X \to M \action Y$ is a morphism of $\Ca$ and $u : M \bullet Y' \to Q \bullet X'$ is a morphism of $\Da$.
We depict such a morphism by a string diagram, shown in Figure~\ref{fig:para_optic}(a).

\begin{figure}[ht]
	\begin{minipage}{0.5\textwidth}
		\sctikzfig{figures/paraoptic}
	\end{minipage}\begin{minipage}{0.5\textwidth}
		\sctikzfig{figures/paraopticreparam}
	\end{minipage}
	\caption{A morphism (a) and reparametrisation (b) in $\Para[\ostar]{\Optic[\action,\action]{\Ca, \Da}}$.}
	\label{fig:para_optic}
\end{figure}

\paragraph{Cybernetic systems.}
The mathematical framework of parametrised optics extends the one already known
for dynamical systems, in which different flavours of lenses are used to
represent bidirectional information flow \cite{vagner2014algebras,
  myers2020double, cts}, in a direction first hinted at
in~\cite{fong-spivak-tuyeras-backprop-as-functor} and~\cite{Dioptics}.
Parametrised optics model cybernetic systems, namely dynamical systems
\emph{steered} by one or more agents.
Then $\ostar$ represents \emph{agency being exerted} on systems; in particular
agents act in the world through the $\Ma$-actegorical structure on $\Ca$ and
receive feedback through the $\Ma$-actegorical structure on $\Da$.%
\footnote{A happy coincidence of terminology: \emph{agents act through actions}.}
$\Optic[\odot]\Ma$ is a category representing agents, and the framework of $\Para[\ostar]{\Optic[\action,\action]{\Ca, \Da}}$ is what allows us to interpret it as such.%
\footnote{We remark agency is a property of the \textit{model} of a system and not of the system \emph{per se}.
This is reflected in the mathematics by the fact that all morphisms in
$\Para{\Ca}$ are morphisms of $\Ca$, arranged in a different way.}


Graphically, vertical wires in $\Para[\ostar]{\Optic[\action,\action]{\Ca, \Da}}$ tell us how agents interact and compose.
Reparametrisations (Figure~\ref{fig:para_optic}(b)) denote agency dynamics as
happening \emph{over} the horizontal dynamical system (which can be called an
\textbf{arena}, or a \textbf{protocol}) they act within.
This point of view seamlessly unifies parameters and coparameters of a parametrised optic with residuals inside the optic, which are now representing the \emph{private state} of agents controlling the given dynamical system.
This state ferries information between the forward and the backward part of the dynamics (playing the role of `memory') so that an agent's feedback is contingent on the action they brought about (see \cite[p.~192]{strategy}).%
\footnote{Further investigation in this direction is being pursued by the authors.
In particular, it seems profitable to adopt a notion of \emph{lax optics} in which the coend of Definition~\ref{def:optics} is replaced by a lax coend \cite[Chapter 7]{loregian2020coend}.
Residuals and slidings would then be explicitly tracked as 2-dimensional structure on $\Optic{\Ca,\Da}$, as much as parameters in $\Para{\Ca}$ are tracked in the 2-dimensional structure of that category.
Lax fubini (ibid.) gives rise to an interesting duality between residuals and
parameters, which the authors first heard of in a Zulip conversation started by
Mitchell Riley~\cite{RileyZulip}.} Furthermore, more complex protocols of
interaction can be created, for instance, using optics for the coproduct
(prisms), modeling control flow.

What is missing in the mathematical structure of $\Para{\Optic{-}}$ is a feedback mechanism, central in cybernetics.
In the next sections, we explore the current solutions adopted by the two most developed instantiations of the framework just described, namely \emph{open learners} and \emph{open games}.

\subsection{Neural Networks}
A special case of this construction is studied in \cite{CatFoundGradDesc}, in which the authors instantiate this construction in the context of machine learning, when the base is set to $\Smooth$, the category of Euclidean spaces and smooth maps.
They first start by reframing backpropagation in terms of optic composition, via the functor $R: \Smooth \to \Optic{\Smooth}$ which augments maps with their backward pass (though they use lenses, as defined in Remark~\ref{rem:spec}).
Then, using the action of $\Para{-}$ on morphisms (Proposition~\ref{prop:para_monad}), they lift this functor to its $\Para{-}$ counterpart.
We now unpack this in more detail.

Consider a morphism in $(\Rb^p, f) : \Para{\Smooth}(\Rb^n, \Rb^m)$.
It consists of a \textit{parameter space} $\Rb^p$ and a smooth function $f : \Rb^p \times \Rb^n \to \Rb^m$.
For instance, this could be the forward pass of a complex, multi-layered neural network.
It takes in a parameter value $p : \Rb^p$, an input $x : \Rb^n$ and computes a \textit{prediction} $f(p, x) : \Rb^m$.
Composition of morphisms in $\Para{\Smooth}$ reduces to composition of forward passes of neural networks, coherently tracking which subnetworks each
incoming parameter needs to be relayed to.
This becomes important as the authors apply a functor
\begin{equation}
	\label{eq:para_r}
	\Para{R} : \Para{\Smooth} \to \Para{\Optic{\Smooth}},
\end{equation}
augmenting this neural network with the \textit{backward pass}.
Acting on the morphism $f$, this results in a forward and a backward component (Figure~\ref{fig:para_optic}(a)).
The forward component consists of an input parameter space $\Rb^p$ and a morphism $v : \Rb^p \times \Rb^n \to \Rb^m \times \Rb^n$ which in addition to computing $f$ also copies the input $\Rb^n$ and saves it as the \textit{residual} used in the backward pass.
The backward pass consists of an output parameter space $\Rb^p$ (interpreted as the space of \textit{changes}), and a map $u : \Rb^n \times \Rb^m \to \Rb^n \times \Rb^p$ which takes in a \textit{change in the output}, and together with the saved residual computes a change in the parameters and change in inputs.
The latter is then subsequently used in the previous learner as its incoming change in outputs.
This complex data flow is automatic: it falls out of the machinery of both $\Para{-}$ and $\Optic{\Smooth}$.
The former deals with parameter spaces, and the latter knows how to perform backpropagation.

These parameter spaces become important as neural networks can be coherently
\textit{reparametrised} within this framework, where reparametrisations in
$\Para{\Optic{\Smooth}}$ are themselves optics.
For instance, consider the parameter port $(\Rb^p, \Rb^p)$ of a 1-cell: it takes in a parameter value and produces a \emph{change in that parameter}.
By using the optic of gradient descent we can reparametrise this learner and express the idea that a learner is \emph{minimizing} a particular objective function.

\begin{construction}[{Gradient Descent, compare \cite[Example 3.15]{CatFoundGradDesc}}]
\label{ex:grad_desc}
  Consider a lens on the base $\Ca = \Smooth$.
  Fix $\alpha : \Rb$.
  The notion of $\alpha$-gradient descent is a lens $gd_{\alpha} : \diset{\Rb}{\Rb} \opticto \diset{\Rb}{\Rb}$ whose forward map is identity, and the backward map $(p, \nabla p) \mapsto p - \alpha \nabla p$ computes a new parameter value by moving in the $\alpha$-scaled direction of the change in the parameter $\nabla p$.
  For negative choices of $\alpha$ we obtain gradient \emph{ascent}, and denote it with $ga_{\alpha}$.
\end{construction}

The archetypal example of two neural networks optimizing \textit{different goals} is Generative Adversarial Networks (GANs) \cite{GAN, WGAN}.
We hereby present the first categorical formalization of this neural network architecture and interpret its game-theoretic behaviour in terms of opposing reparametrisations: gradient \textit{descent} and gradient \textit{ascent}.%
\footnote{We focus on WGAN for simplicity, and ignore the Lipschitz regularization condition.}
A GAN is composed of two networks: a \textit{generator (g)} and a \textit{discriminator (d)} network in $\Para{\Optic{\Smooth}}$, obtained as the image under the functor $\Para{R}$ (Equation~\ref{eq:para_r}).
The discriminator (an element of $\Para{\Smooth}(\Rb^x, \Rb)$) is tasked with
assigning an $\Rb$-valued \textit{cost} to each point of a particular ``image space'' (denoted with $\Rb^x$) as a measure of how much it \textit{looks like}
it belongs to a particular predefined dataset, where low cost means realistic-looking samples.
The generator (an element of $\Para{\Smooth}(\Rb^z, \Rb^x)$) is tasked with
generating realistic-looking samples from this dataset, by taking in some
``latent vector'' in $\Rb^z$ and producing a sample in $\Rb^x$ (often called the ``fake'' sample).
The user's goal is to train the generator such that, as we vary the latent vector input $\Rb^z$, we obtain different realistic-looking samples in $\Rb^x$.
This is done via the \textit{adversarial training} regime and a carefully chosen composition of these networks (Figure~\ref{fig:gan}).
This regime feeds in real and fake samples to the discriminator, training it to distinguish between them.
At the same time, it trains the generator with an opposing goal: producing samples that \emph{fool} the discriminator, making it assign a low cost to them.

\begin{figure}[ht]
  \sctikzfig{figures/GAN}
  \caption{A generative adversarial network as a closed system.}
  \label{fig:gan}
\end{figure}

This adversarial component is in our framework captured by reparametrisation, allowing us to specify agent preferences \textit{internal to the agent}.
Consider the composite $R(g) \comp R(d)$ in $\Para{\Optic{\Smooth}}$ (middle part of Figure~\ref{fig:gan}).
At some time step $i$ this neural network takes a latent vector $z_i$ as input, produces a
\textit{fake} element $x_i$ of $\Rb^x$ using $g$, and assigns an $\Rb$-valued score to it using $d$.
In this setting we want the discriminator to assign an even higher score next time (as the sample was fake) and the generator to produce a sample which will have a lower score next time (as it is trying to fool the discriminator).
We can achieve this by reparametrising both the generator and the discriminator, but with different, ``opposing'' optics.
We reparametrise the generator with the gradient \textit{descent} (making it move in the negative direction of the gradient, minimizing the cost the discriminator assigns to the fake sample), and the discriminator with the gradient \textit{ascent} (making it move in the positive direction of the gradient, increasing the cost discriminator assigns to the fake sample).
This allows both of these agents to have unchanged external behaviour during one
time-step of updating, and use the same training signal to enact a different
update rule internally.%
\footnote{The lens costates on the right of
  Figure~\ref{fig:gan} are aptly called $dx$. Their backward maps are constant at $1 : \mathbb{R}$, the usually invisible ``initial'' factor of $1$ in backpropagation.}

Another benefit of reparametrisation is that it allows us to recast two 1-cells with the same parameter object as \textit{the same player}.
Consider the discriminator in Figure~\ref{fig:gan}.
It appears twice: once valuating samples coming from the generator ($x_i$) and
once evaluating samples coming from the actual dataset ($d_i$).
The parameter port of the parallel product (Definition~\ref{def:para_monoidal}) $R[d] \times R[d]$ takes in two parameters $\Rb^q \times \Rb^q$ as input and produces \textit{two} changes $\Rb^q \times \Rb^q$ in these parameters as output.
By first reparametrising this product using the image of the copy map $\Delta_{\Rb^q} : \Rb^q \to \Rb^q \times \Rb^q$ under the functor $R$, we make sure that the same parameter is copied to both discriminators in the forward pass, and their gradients are summed up on the backward pass.
Also called \textit{weight tying}, this couples together two different neural
networks and treats them as a single unit.




    \section{Selection relations}

In cybernetics, agents provide parameters which are evaluated by the environment through a dynamical process and fed back to them.
Crucially, agents then use this information to update the parameters they provide, closing the loop.
When agents are satisfied with the outcome of the interaction, an equilibrium is established.
In machine learning, parameter updating is explicitly modelled (Construction~\ref{ex:grad_desc}), but in game theory we directly seek the equilibrium.
We propose here a framework for equilibrium selection for parametrised optics based on a generalization of selection functions.

A selection relation, defined in \cite{HigherOrderDecisions} as ``multi-valued selection functions'', is a relation of type $\varepsilon \subseteq X \times (X \to R)$. Selection relations provide an abstraction of the ``personality'' or ``goals'' of game-theoretic agents. Specifically, we consider that a function $\varepsilon \subseteq X \times (X \to R)$ describes an agent who can make a choice from a set $X$, with outcomes in a set $R$, in which $(x, k) \in \varepsilon$ means that the agent considers the move $x$ to be `good' in the context $k$, mapping possible moves $x'$ to their outcome $k (x')$.

Selection relations are a variant of the better-known single-valued selection functions \cite{SequentialGames}, which are better behaved mathematically (forming a monad) but are less flexible as descriptions of agents. For example, the very common \emph{utility-maximizing agents} can be described as a multivalued selection function $\argmax$, where $(x, k) \in \argmax$ iff $x$ attains the global maximum of $k$.

\paragraph{The functor of selection relations.}
The starting observation of the following mathematical treatment of selection
relations is that a point $x \in X$ is exactly a state of $\diset{X}{R}$ in
the monoidal category $\Lens{\Set}$, while a function $k: X \to R$ is exactly a
costate of $\diset{X}{R}$.
Therefore a selection relation can be seen as a relation $\varepsilon \subseteq \Lens\Set(I,\diset{X}{R}) \times \Lens\Set(\diset{X}{R},I)$.

\begin{definition}
  Let $\Ma$ be a monoidal category. We define the functor $\Sel_{\Ma}: \Ma \to \Cat$ as follows:
  \begin{enumerate}
    \item $\Sel_{\Ma}(X)$ is the poset $\ps (\Ma(I,X) \times \Ma(X,I))$, ordered by inclusion. When $\varepsilon \in \Sel_{\Ma}(X)$, we write $\varepsilon(x,k)$ if $(x,k) \in \varepsilon$, emphasizing our view of $\varepsilon$ as a predicate.
    We call the elements of $\Sel_{\Ma}(X)$ \emph{selection relations on $X$}.
    \item For $f: X \to Y$, $E \in \Sel_{\Ma}(X)$, we define $\Sel_{\Ma}(f)(\varepsilon) \in \Sel_{\Ma}(Y)$ by
    $ \Sel_{\Ma} (f) (\varepsilon) = \{ (x \comp f, k) \mid \varepsilon (x, f \comp k) \} $
  \end{enumerate}
\end{definition}

We simply write $\Sel$ rather than $\Sel_{\Ma}$ when the category under consideration is obvious from context.
It is straightforward to verify that the $\Sel(f)$ are functors (i.e.\ monotone maps), that $\Sel (f\comp g) = \Sel(f) \comp \Sel(g)$, and $\Sel(1_{X}) = 1_{\Sel(X)}$.
To simplify notation, we write $f_{*}$ for $\Sel_{\Ma}(f)$.

Although we write this definition over an arbitrary monoidal category, we are mostly interested in the case where $\Ma = \Optic{\Ma'}$, where $\Ma'$ is a semicartesian symmetric monoidal category.
Then, equivalently, we have that $\Sel_{\Optic{\Ma'}} \diset{X}{X'} \cong \ps (\Ma' (I, X) \times \Ma' (X, X'))$.

%

\begin{example}
  Let $\Ma = \Lens\Set$. Then for any set $X$, there is a selection relation $\argmax_{X}$ on $\diset{X}{\Rb}$ defined by $\argmax_{X}(x,k)$ iff $k (x) \geq k (x')$ for all $x' \in X$.
  Analogous selection relations also exist over many other suitable categories.
\end{example}
%

\paragraph{The Nash product.}
There is a composition law for selection relations called the \emph{Nash product}, generalising a construction that appeared in \cite{hedges_backward_induction_repeated_games} as the ``sum of selection functions''.
Given a context on a tensor product $k: X \otimes Y \to I$, we can imagine two agents, one of whom controls the state $x: I \to X$ and one who controls the state $y: I \to Y$.
Suppose their decisions are governed respectively by the selection functions $\varepsilon$, $\delta$.
Given $y$, we can compose it with $k$ to form $k_y := (1_{X} \otimes y) \comp k : X \to I$, and ask whether the first player is satisfied with their choice in this context.
Analogously, we can ask whether the second player is satisfied with their choice in the context $k_x$ given by $x$ and $k$.
If both are satisfied, the composite state $x \otimes y: I \to X \otimes Y$ is said to be an \emph{equilibrium} --- nobody wants to unilaterally deviate from it.

In a general (non-cartesian) monoidal category, there may be states $s: I \to X \otimes Y$ which do not have the form $x \otimes y$.
We think of states of the form $x \otimes y$ as \emph{independent}. The Nash product models a situation where players make their choices without communication, and so we conservatively rule out any non-independent states as Nash equilibria.


\begin{proposition}
  The functor $\Sel: \Ma \to \Cat$ admits a laxator $\boxtimes: \Sel(X) \times \Sel(Y) \to \Sel(X \otimes Y)$
  given by
\[ \varepsilon \boxtimes \delta = \{ (x \otimes y, k) \mid \varepsilon (x, k_y) \text{ and } \delta (y, k_x) \} \]
where $k_x : Y \cong I \otimes Y \overset{x \otimes I}\longrightarrow X \otimes Y \overset{k}\longrightarrow I$ and $k_y : X \cong X \otimes I \overset{I \otimes y}\longrightarrow X \otimes Y \overset{k}\longrightarrow I$.
\end{proposition}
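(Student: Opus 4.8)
The plan is to check three things: (i) for each $X,Y$ the displayed formula defines a functor $\Sel(X)\times\Sel(Y)\to\Sel(X\otimes Y)$; (ii) this family is natural in $X$ and $Y$; and (iii) it is compatible with the associators, unitors and braidings of $\Ma$ and with the cartesian monoidal structure of $\Cat$, so that $\Sel$ becomes a symmetric lax monoidal functor. Point (i) is immediate: $\varepsilon\boxtimes\delta$ is visibly a subset of $\Ma(I,X\otimes Y)\times\Ma(X\otimes Y,I)$, and the predicate cutting it out is monotone in each of $\varepsilon$ and $\delta$, so $\boxtimes$ is a monotone map of posets, i.e.\ a functor.

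For naturality I would first record the book-keeping identities, valid for all $f:X\to X'$, $g:Y\to Y'$ in $\Ma$ and all costates $k:X'\otimes Y'\to I$:
\[
  \bigl((f\otimes g)\comp k\bigr)_{y} \;=\; f\comp k_{\,y\comp g}
  \qquad\text{and}\qquad
  \bigl((f\otimes g)\comp k\bigr)_{x} \;=\; g\comp k_{\,x\comp f},
\]
each of which follows from naturality of one of the unitors of $\Ma$ together with bifunctoriality of $\otimes$; combined with $(x\comp f)\otimes(y\comp g)=(x\otimes y)\comp(f\otimes g)$ these turn the naturality square $\Sel(f\otimes g)\circ\boxtimes=\boxtimes\circ(\Sel(f)\times\Sel(g))$ into an elementary comparison of subsets. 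The point requiring care is that $\boxtimes\circ(\Sel(f)\times\Sel(g))$ a priori allows \emph{different} witnesses for the $\varepsilon$-clause and the $\delta$-clause, while $\Sel(f\otimes g)\circ\boxtimes$ produces a single pair $(x,y)$; the inclusion from the former into the latter works because fixing the output state $x'\otimes y'$ already determines $x'=x\comp f$ and $y'=y\comp g$, so the cross-dependence of $\varepsilon$'s context on $y'$ and of $\delta$'s context on $x'$ is automatically consistent. I expect this to be the most delicate step.

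For (iii), the associativity coherence reduces, after unwinding, to a Fubini-type statement for iterated context restriction: for $k:X\otimes Y\otimes Z\to I$, restricting two coordinates in succession agrees, modulo the associator, with restricting the pair at once, so that in $(\varepsilon\boxtimes\delta)\boxtimes\gamma$ and in $\varepsilon\boxtimes(\delta\boxtimes\gamma)$ each of $\varepsilon(x,-)$, $\delta(y,-)$, $\gamma(z,-)$ receives the context obtained by fixing the other two states. The lax unit is the selection relation $\eta_I$ on $I$ that holds at $(b,k)$ exactly when $b=1_I$; the left and right unitor coherences then follow from the unitors undoing the coherence isomorphisms $I\cong I\otimes I$, exactly as in the naturality computation (and any strictly larger relation on $I$ breaks them). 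Symmetry compatibility, $\Sel(\beta_{X,Y})(\varepsilon\boxtimes\delta)=\delta\boxtimes\varepsilon$, follows from naturality of the braiding $\beta$, which carries $x\otimes y$ to $y\otimes x$ and exchanges $k_x$ with $k_y$. All of (iii) is routine once the restriction identities of step (ii) are available; the only real mathematical content sits in the asymmetry noted in the naturality argument.
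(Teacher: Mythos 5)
Your proposal is correct and follows essentially the same route as the paper: the crux in both is naturality of $\boxtimes$, verified by unwinding the pushforwards and applying the symmetric monoidal equations (your book-keeping identities and the observation that the two existential witnesses can be chosen independently, since each context depends only on the composite states, are exactly the paper's ``commuting existential quantifiers'' step), with associativity, unitality and symmetry treated as routine. Your additional remark about the unit --- that it should be $\{(b,k)\mid b=1_I\}$ rather than anything larger --- is a sensible refinement and does not conflict with the paper, which takes $\top_I$ as the unit in its intended semicartesian setting, where scalars are trivial and the two choices coincide.
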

\begin{proof}
  It is straightforward to verify associativity and unitality of the lax monoidal structure,
  as well as commutativity. The nontrivial step is to verify that $\boxtimes$ is a natural transformation at all.
  To that end, let $f: X \to Y, f': X' \to Y'$, and let $\varepsilon \in \Sel(X), \delta \in \Sel(X')$.
  Then we must verify $f_{*}\varepsilon \boxtimes f'_{*}\delta = (f\otimes f')_{*}(\varepsilon \boxtimes \delta)$.

  This statement boils down to the fact that existential quantifiers commute.
  On the one hand, we have $(f \otimes f')_{*}(\varepsilon \boxtimes \delta) (s,k)$ if and only if there exists
  a factorization $s = t \comp (f \otimes f')$ and a further factorization $t = x \otimes x'$ so that
  $\varepsilon(x, (1_{x} \otimes x') \comp (f \otimes f') \comp k)$, and analogously for $x'$.
  On the other hand, we can ask that $s$ factor as $y \otimes y'$ so that $f_{*}\varepsilon(y, (1_{y} \otimes y') \comp k)$ (and analogously for the other one).
  This in turn means that $y$ factors as $x \comp f$, and that $y' = x' \comp f'$, with $\varepsilon(x, f \comp (1_{y} \otimes (x' \comp f')) \comp k)$,
  which, applying the equations of a symmetric monoidal category, is the same as the condition above.
  Hence the two selection relations are equivalent as desired.
\end{proof}

\begin{example}
  Let $X,Y$ be sets. Then $\argmax_{X} \boxtimes \argmax_{Y}$ is a relation between $X \times Y$ and ${X \times Y \to \Rb^2}$.
  Specifically, $(\argmax_{X} \boxtimes \argmax_{Y})((x,y),k)$ if and only if $(x,y)$ is a pure strategy Nash equilibrium for the 2-player normal form game with payoff matrix $k$.
\end{example}

Therefore, the functor $\Sel : \Ma \to \Cat$ is an indexed monoidal category.
Such an indexed category is turned into a (strong) monoidal functor $\int \Sel \to \Ma$ by the monoidal Grothendieck construction \cite{moeller_vasilakopoulou_monoidal_grothendieck}:

\begin{definition}
  The category $\Ma_\Sel := \int \Sel$ has
  \begin{enumerate}
    \item objects given by pairs $(X, \varepsilon)$ of an object of $\Ma$ together with a selection relation $\varepsilon \in \Sel (X)$ on it.
    \item morphisms $(X, \varepsilon) \to (Y, \delta)$ given by morphisms $f : X \to Y$ in $\Ma$ with the property that for all $h : I \to X$ and $k : Y \to I$, if $(h, f \comp k) \in \varepsilon$ then $(h \comp f, k) \in \delta$.
  \end{enumerate}
  It is monoidal with unit $(I, \top_I)$ and product $(X, \varepsilon) \otimes (Y, \delta) = (X \otimes Y, \varepsilon \boxtimes \delta)$.
  The projection $\pi : \Ma_\Sel \to \Ma$ is trivially strong monoidal.
\end{definition}



    \section{Open Games}

In this section we will equip parametrised optics with selection relations on the parameters. This results in a category in which we can do `compositional game theory', which refines open games \cite{hedges_etal_compositional_game_theory} by adding an explicit account of agents. More detail on this perspective can be found in \cite{other_paper}.



Let $\Ca$ be an $\Ma$-actegory. Since the forgetful functor $\pi : \Ma_\Sel \to \Ma$ is strong monoidal, $\Ca$ is also an $\Ma_\Sel$-actegory with action given by
$
	(X, \varepsilon) \action_\Sel Y := X \action Y
$.
We can then use this action to form the symmetric monoidal bicategory $\Para[\action_\Sel]{\Ca}$ as per Definition~\ref{def:para}.
In particular, morphisms of this category will be parametrised not just by objects of $\Ma$ but also by selection relations on them, which specify preferences on the parameter space.
%
Now let $\Ca$ and $\Da$ be monoidal $\Ma$-actegories. Then $\Optic{\Ca, \Da}$ is monoidal and instantiating the above construction for the action of $\ostar$ described in Proposition~\ref{prop:canon_optic_action} gives us a monoidal category $\Para[\ostar_\Sel]{\Optic{\Ca, \Da}}$. We think of this as a category of ``open games''. Although it is closely related to existing definitions of open games \cite{hedges_etal_compositional_game_theory,bolt_hedges_zahn_bayesian_open_games,atkey_etal_cgt_compositionally}, it differs in several key ways.


The parameter object $M$ is an object of $\Optic{\Ma}_\Sel$, hence a pair $\diset{\Omega}{\Comega}$,%
\footnote{We pronounce the symbol $\Comega$ as ``com\'ega''.}%
equipped with a selection relation $\varepsilon \in \Sel_{\Ma} \diset{\Omega}{\Comega}$. $\Omega$ is the object of `strategy profiles' found in other definitions of open games (usually called $\Sigma$). We refer to $\Comega$ as the set of `rewards', or `intrinsic utilities', that the agents playing a game actually optimise. Despite having such a clear game-theoretic reading, agents' rewards were left implicit in previous formulations of open games.

When $\Ca = \Da = \Ma$ is a semicartesian monoidal category, acting on itself by monoidal product, then a scalar (morphism $I \to I$) in $\Para[\otimes_\Sel]{\Optic{\Ma}}$ consists of a pair of parameter objects $\diset{\Omega}{\Comega}$, a selection relation $\varepsilon \subseteq \Ma (I, \Omega) \times \Ma (\Omega, \Comega)$, and a morphism $k : \Ma (\Omega, \Comega)$. We can then form the set $\{ \omega \in \Ma (I, \Omega) \mid \varepsilon (\omega, k) \}$. We think of this as the \emph{solution set} of the game: it is the set of strategy profiles that the agents accept in the context given by the game they are playing.

The simplest possible case is when $\Ca = \Da = \Ma = \Set$, acting on itself by cartesian product. In this case, a morphism $\diset{X}{S} \xrightarrow{(\Omega,\Comega)} \diset{Y}{R}$ consists of a pair of sets $\diset{\Omega}{\Comega}$, a selection relation ${\varepsilon \subseteq \Omega \times (\Omega \to \Comega)}$, and a lens $\Ga : \diset{\Omega}{\Comega} \otimes \diset{X}{S} \to \diset{Y}{R}$. This in turn consists of a ``play function'' $P : \Omega \times X \to Y$, and a function $\Omega \times X \times R \to \Comega \times S$ that splits into a ``coplay function'' ${C : \Omega \times X \times R \to S}$ and an ``intrinsic utility function'' $U : \Omega \times X \times R \to \Comega$.%
\footnote{In \cite{fong-spivak-tuyeras-backprop-as-functor} the function $U$ is called `update'.}
This data determines an open game in $\mathbf{OG} \big( \diset{X}{S}, \diset{Y}{R} \big)$, where $\mathbf{OG}$ is the usual category of open games in the sense of \cite{hedges_etal_compositional_game_theory}.
Specifically, we keep the set of strategy profiles, the play function and the coplay function the same.
It remains to choose an equilibrium function ${E : X \times (Y \to R) \to \mathcal P (\Omega)}$. We define it by ${E (h, k) = \{ \omega \mid \varepsilon (\omega, K_{h,k}) \}}$, where $K_{h,k}$ is given by the following composition in $\Lens\Set$:
\[ \diset{\Omega}{\Comega} \overset\cong\longrightarrow \diset{\Omega}{\Comega} \otimes I \xrightarrow{\diset{\Omega}{\Comega} \otimes h} \diset{\Omega}{\Comega} \otimes \diset{X}{S} \overset\Ga\longrightarrow \diset{Y}{R} \overset{k}\longrightarrow I \]
This construction defines a strong monoidal functor $\Para[\ostar_\Sel]{\Lens\Set} \to \mathbf{OG}$, and can also be carried out for Bayesian open games and other more general formulations of open games.

As a worked example, we will demonstrate how this formulation of game theory improves on open games by describing Prisoner's Dilemma and then modifying it so that both decisions are made by the same agent. For simplicity, our base will again be $\Set$ acting on itself by cartesian product, which describes deterministic games and pure strategy Nash equilibria. Given sets $X$ and $Y$, a decision to choose an element of $Y$ after observing an element of $X$ and with a real number payoff is described by a morphism $\diset{X}{1} \to \diset{Y}{\Rb}$ in $\Para{\Optic{\Set}}$, whose parameter sets are $\diset{X \to Y}{\Rb}$, where the play function is function application $(X \to Y) \times X \to Y$, and the intrinsic utility function is the projection $(X \to Y) \times X \times \Rb \to \Rb$. Note that we have separated the concept of a decision in a game, where a choice is made and a real number payoff is obtained, from the goal of the agent to maximise that payoff, or the mechanism by which they do so. This aligns with classical game theory, but is in contrast to open games, in which agents and decisions are conflated. In the case that $X$ has one element, describing a decision with no observation, decisions are particularly simple and can be denoted by a bending wire, as illustrated in Figure~\ref{fig:pd}.

\begin{figure}[ht]
	\begin{minipage}{0.5\textwidth}
		\sctikzfig{figures/pdargmax}
	\end{minipage}\begin{minipage}{0.5\textwidth}
		\sctikzfig{figures/pdpareto2}
	\end{minipage}
	\caption{(a) Standard Prisoner's Dilemma \hspace{3cm} (b) Modified Prisoner's Dilemma}\label{fig:pd}
\end{figure}

We take $Y = \{ C, D \}$ to be the set of moves in Prisoner's Dilemma. We take the tensor product of two decisions, describing that they are made in parallel. We then postcompose with the morphism $\diset{Y^2}{\Rb^2} \to I$ in $\Para[\times_\Sel]{\Lens{\Set}}$ given by lifting the payoff function $PD: Y^2 \to \Rb^2$ of Prisoner's Dilemma to a costate in $\Optic{\Set}$, taking the parameter sets to be $\diset{\Omega}{\Comega} = (1, 1) = I$. This results in a scalar $I \to I$ in $\Para[\times_\Sel]{\Optic{\Set}}$, with parameters $\diset{Y^2}{\Rb^2}$, described by the lower half of Figure~\ref{fig:pd}(a).
Ultimately, this determines a costate $\diset{Y^2}{\Rb^2} \to I$ in $\Lens\Set$, namely another copy of the payoff function $PD$.

In order to describe the standard Prisoner's Dilemma, we compose this diagram with a pair of $\argmax$ operators, describing the situation where a pair of players independently optimise their own payoff, without communicating. The lack of communication can be seen in the `air gap' in the top part of Figure~\ref{fig:pd}(a).%
\footnote{The top parts of Figure~\ref{fig:pd} are currently informal: we are drawing the selection functions as though they are states in $\Lens\Set$, but they are not. It may be possible, by thinking of them as `generalised states', to embed (perhaps by a clever use of co-Yoneda) into a larger category in which they are states; alternatively the top parts can be thought of as `just' a decoration of the top boundaries.}
The composition of a pair of $\argmax$ operators involves the Nash product, and in the end applying the costate $PD$ to the selection function $\argmax_Y \boxtimes \argmax_Y$ determines the solution set, namely the set $\{ (D, D) \}$, which is the unique Nash equilibrium of Prisoner's Dilemma.

Now consider the modified Prisoner's Dilemma denoted by Figure~\ref{fig:pd}(b).
This can be described in two different ways, which are equivalent but not the same: namely as Prisoner's Dilemma pulled back along an optic on parameters, coupled to the $\argmax$ selection function describing a classical optimising player; or alternatively as the ordinary Prisoner's Dilemma coupled to a selection function obtained by pushing forwards $\argmax$ along the same optic, describing a player who makes two choices to maximise the sum of two payoffs. Such a strategy profile is known as Hicks optimal, a strengthening of Pareto optimality.
For Prisoner's Dilemma, the solution set of this example is $\{ (C, C) \}$. Notably this is disjoint from the set of Nash equilibria.

Although this is only a very simple example, separating an open game into these `horizontal' and `vertical' parts provides a very flexible language for describing a wide variety of game-theoretic situations, such as those described in \cite{other_paper}.


    \bibliographystyle{eptcs}
    \bibliography{act21-cybercat}
\end{document}